\title{Krawtchouk transforms and Convolutions}
\author{Philip Feinsilver}
\address{Department of Mathematics\\
Southern Illinois University\\
Carbondale, IL. 62901, U.S.A.}
\author{Ren{\'e} Schott}
\address{IECL and LORIA\\
Nancy-Universit\'e, Universit\'e de Lorraine\\
BP 239, 54506 Vandoeuvre-l\`es-Nancy, France.}
\begin{document}
\maketitle

\begin{abstract}    
We put together the ingredients for an efficient operator calculus based on Krawtchouk polynomials, including Krawtchouk transforms and 
corresponding convolution structure which provide an inherently discrete alternative to Fourier analysis. 
In this paper, we present the theoretical aspects and some basic examples. 
\end{abstract}

\thispagestyle{empty}
\begin{section}{Introduction}

Krawtchouk polynomials are part of the legacy of Mikhail Kravchuk (Krawtchouk), see \cite{VirKrav} as a valuable resource about his life and work, including
developments up through 2004 based on his work. Krawtchouk polynomials appear in diverse areas of mathematics and science. Important
applications such as to image processing \cite{Y} are quite recent and indeed are current.\bigskip

We cite \cite{APW, Lo2, SA} where Krawtchouk polynomials are used as the foundation for discrete models of quantum physics. And they appear
naturally when studying random walks in quantum probability \cite{MR1873669,MR2102950}.  \bigskip

After this Introduction, we continue with the probabilistic construction of Krawtchouk polynomials. They appear as the elementary
symmetric functions in the jumps of a random walk, providing a system of martingales based on the random walk.
Some fundamental recurrence relations are presented as well. The construction immediately yields their orthogonality relations.
Alternative probabilistic approaches to ours of \S2 are to be found in 
\cite{MR0486694,MR2407600,MR724060,MR2470507}.   \bigskip

Section 3 provides the linearization
and convolution formulas that are the core of the paper. They are related to formulas found in \cite{MR0320392,MR523857}. 
The next section, \S4, specializes to the case of a symmetric random walk, where the formulas simplify considerably. \bigskip

In Section 5 we introduce shift operators and use them to develop a computationally effective approach to finding transforms and convolutions.
This differs from our principal work with operator calculus \cite{FS2} and recent approach to Krawtchouk transforms \cite{AISC2010} 
and is suitable for numerical as well as symbolic computations.  \bigskip

The article concludes with \S6 which presents special bases in which the Krawtchouk matrices are anti-diagonal. These basis functions
have limited support and look to be useful in implementing filtering methods in the Krawtchouk setting.
\end{section}

\begin{section}{Combinatorial and probabilistic basis. Main features.}
Consider a collection of $N$ bits $B=\{0,1\}$ or signs $S=\{-1,1\}$. Correspondingly, we let $j$ denote
the number of $0$'s or $-1$'s. And we denote the sum in either case by $x$. So $x=N-j$ for bits, $x=N-2j$ for signs.
Order the elements of $B$ or $S$ and denote them by $X_i$, $1\le i\le N$. We can encode this information in the generating function
$$G_N(v)=\prod_{i=1}^N (1+vX_i)$$
Now introduce a binomial probability space with the $X_i$ a sequence of independent, identically distributed Bernoulli variables.
With $p$ the probability of ``success", $q=1-p$, the centered random variables are distributed as follows: \bigskip

Bits: $\displaystyle X_i-p=\begin{cases}\ \  q,&\text{with probability }p\\ -p,&\text{with probability }q \end{cases} $\bigskip

Signs: $\displaystyle X_i-(p-q)=\begin{cases}\ \   2q,&\text{with probability }p\\ -2p,&\text{with probability }q \end{cases} $\bigskip

To get a sequence of orthogonal functionals of the process we redefine
\begin{equation}\label{expm}
G_N(v)=\prod_{i=1}^N (1+v(X_i-\mu))=\sum_n v^n\,k_n(j,N)
\end{equation}
where $\mu$ is the expected value of $X_i$. We see that the two cases differ effectively as a rescaling of $v$. 
To see how this comes about, consider general Bernoulli variables $X_i$ taking values $a$ and $b$ with probabilities $p$ and $q$ respectively.
Then the centered variables take values 
$$\begin{cases}
a-(pa+qb)=\lambda q, &\text{with probability }p\\ b-(pa+qb)=-\lambda p, &\text{with probability }q
\end{cases}
$$
where $\lambda=a-b$. We can take as standard model $b=0$ and $a=\lambda$. Then 
$$\mu=\lambda p\quad\text{and}\quad \sigma^2=\lambda^2pq$$
are the mean and variance of $X_i$. Thus, $G$ has the form
$$G_N(v)=(1+\lambda qv)^{N-j}(1-\lambda pv)^j=\sum_n v^n\,k_n(j,N)$$
with $j$ counting the number of $0$'s and
$$k_n(j,N)=\lambda ^n\,\sum_i \binom{N-j}{n-i}\binom{j}{i}(-1)^i p^i q^{n-i}$$
These are polynomials in the variable $j$, \textit{Krawtchouk polynomials}. We define a corresponding matrix 
$$\Phi_{ij}^{(N)}=k_i(j,N)$$
which acts as a transformation on $\RR^{N+1}$, which we consider as the space of functions defined on the set $\{0,1,\ldots,N\}$.
The generic form, equation \eqref{expm}, is convenient for revealing and proving properties of the Krawtchouk polynomials,
and of the transform $\Phi$. \bigskip

We review here some principal features of this construction \cite{MR1873669,MR2102950}. \bigskip

\begin{remark} Denote expectation with respect to the underlying binomial distribution with angle brackets:
$$\avg f(X).=\sum_j \binom{N}{j}f(j)\,p^{N-j}q^j$$
and corresponding inner product $\avg f,g.=\avg f(X)g(X).$ .
\end{remark}

\begin{subsection}{Martingale property}
Since the $X_i$ are independent and $X_i-\mu$ has mean zero, we have the martingale property
$$E(G_{N+1}|\F_N)=\avg (1+v(X_{N+1}-\mu).\, G_N=G_N$$
where $\F_N$ is the $\sigma$-field generated by $\{X_1,\ldots,X_N\}$. Thus each coefficient $k_n(j,N)$ is a martingale, where $j$
denotes the number of 0's in the random sequence of 0's and 1's which is the sample path of the underlying Bernoulli process. This gives immediately
\begin{proposition} \text{Martingale recurrence}
$$k_n(j,N)=p\,k_n(j,N+1)+q\,k_n(j+1,N+1)$$
\end{proposition}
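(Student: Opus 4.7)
The plan is to read off the recurrence directly from the martingale identity $E(G_{N+1}\mid\F_N)=G_N$ stated in the preamble, after tracking how the state variable $j$ updates when the new Bernoulli $X_{N+1}$ is revealed.

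First I would make precise the conditional expectation on the left. Given $\F_N$, the number of $0$'s in $X_1,\ldots,X_N$ is the fixed value $j$. When $X_{N+1}$ is sampled, with probability $p$ we get $X_{N+1}=a=\lambda$ (a ``success''), so the number of zeros among the first $N+1$ variables remains $j$; with probability $q$ we get $X_{N+1}=b=0$, so the zero-count increases to $j+1$. In either case the length of the sequence becomes $N+1$. Consequently
$$E(G_{N+1}(v)\mid\F_N)=p\,G_{N+1}(v)\big|_{j}+q\,G_{N+1}(v)\big|_{j+1},$$
where the two occurrences of $G_{N+1}$ refer to the generating function with $N+1$ factors but with the number-of-zeros argument equal to $j$ and $j+1$ respectively.

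Next I would equate this to $G_N(v)$, using the martingale property that has already been established from the factorization of $G_N$ as a product of independent terms with mean $1$. This gives the equality of the two generating functions
$$\sum_n v^n k_n(j,N)=\sum_n v^n\bigl(p\,k_n(j,N+1)+q\,k_n(j+1,N+1)\bigr).$$
Comparing coefficients of $v^n$, which is legitimate because both sides are polynomials in $v$, yields the claimed identity.

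I do not expect a real obstacle here: the only point that needs a moment's care is the bookkeeping of how $j$ evolves when the Bernoulli variable takes the value $a$ versus $b$ (i.e.\ the convention that $j$ counts zeros, so ``success'' leaves $j$ unchanged while ``failure'' increments it). If one preferred a self-contained algebraic derivation, the same identity can be obtained by writing $G_{N+1}(v)=G_N(v)\cdot(1+v(X_{N+1}-\mu))$ explicitly in the two cases and averaging, but the martingale route above is cleaner and directly exploits the structure already set up.
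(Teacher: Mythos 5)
Your proof is correct and follows essentially the same route as the paper: the identity is read off from the martingale property $E(G_{N+1}\mid\F_N)=G_N$ by conditioning on whether $X_{N+1}$ equals $\lambda$ (probability $p$, $j$ unchanged) or $0$ (probability $q$, $j\to j+1$) and comparing coefficients of $v^n$. You even note the alternative algebraic derivation via the Pascal recurrences, which the paper also mentions.
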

One can derive this purely algebraically by the Pascal recurrences presented in the next paragraph.
\end{subsection}
\begin{subsection}{Pascal recurrences and square identity}
As is evident from the form of the generating function $G$, we have recurrences analogous to the Pascal triangle for binomial coefficients.
\begin{proposition}{\sl Pascal recurrences} \bigskip

1. $k_n(j,N+1)=k_n(j,N)+\lambda q\,k_{n-1}(j,N)$ \bigskip

2. $k_n(j+1,N+1)=k_n(j,N)-\lambda p\,k_{n-1}(j,N)$\bigskip
\end{proposition}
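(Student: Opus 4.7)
The plan is to read off both recurrences by manipulating the generating function
$$G_N(v)=(1+\lambda qv)^{N-j}(1-\lambda pv)^j=\sum_n v^n\,k_n(j,N)$$
and then matching coefficients of $v^n$. The key observation is that incrementing $N$ or $j$ modifies exactly one of the two factors by an extra linear term.

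For the first recurrence, I would fix $j$ and look at $G_{N+1}(v)$. Since $(N+1)-j=(N-j)+1$, the factor $(1+\lambda qv)$ acquires one additional power, while the other factor is unchanged, so
$$G_{N+1}(v)=(1+\lambda qv)\,G_N(v).$$
Writing out the right side as $\sum_n v^n k_n(j,N)+\lambda q\sum_n v^{n+1}k_n(j,N)$, a shift of summation index and comparison of coefficients of $v^n$ with the left-hand expansion $\sum_n v^n k_n(j,N+1)$ produces recurrence 1 immediately.

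For the second recurrence, I would replace $j$ by $j+1$ and also $N$ by $N+1$. Now $(N+1)-(j+1)=N-j$, so the first factor is unchanged, whereas the second factor gains one power: $(1-\lambda pv)^{j+1}=(1-\lambda pv)(1-\lambda pv)^j$. Hence
$$G_{N+1}(v)\big|_{j\mapsto j+1}=(1-\lambda pv)\,G_N(v),$$
and the same coefficient-matching argument, with a sign flip on the $\lambda p$ term, yields recurrence 2.

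There is really no substantive obstacle here; the only thing to take care of is bookkeeping on the index shift $n \mapsto n-1$ in the convolution of the two power series, together with the convention $k_{-1}(j,N)=0$ to make the relations hold at $n=0$. As noted after the statement, the martingale recurrence of the previous subsection can then be recovered by taking the $p$-weighted combination of recurrence 1 with the $q$-weighted combination of recurrence 2, since $p\cdot\lambda q+q\cdot(-\lambda p)=0$ kills the $k_{n-1}$ terms.
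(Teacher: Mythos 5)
Your proof is correct and follows exactly the paper's route: the paper likewise derives recurrence 1 from $(1+\lambda qv)G_N(v)=G_{N+1}(v)$ and recurrence 2 from $(1-\lambda pv)G_N(v)=G_{N+1}\bigm|_{j\to j+1}$, then compares coefficients of $v^n$. The extra remarks about the index-shift bookkeeping and recovering the martingale recurrence match the paper's surrounding comments as well.
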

These follow directly, first considering $(1+\lambda qv)G_N(v)=G_{N+1}(v)$ and second 
$$(1-\lambda pv)G_N(v)=G_{N+1}\bigm|_{j\to j+1}\ .$$
Note that the martingale property follows by combining $p$ times the first equation with $q$ times the second. \bigskip

Given four contiguous entries forming a $2\times2$ submatrix of $\Phi^{(N)}$, the \textsl{square identity} produces the lower left corner
from the other three values. In terms of the $k$'s:
\begin{proposition}{\sl Square identity}
$$k_n( j,N)=\lambda p\,k_{n-1}( j,N)+\lambda q\,k_{n-1}( j+1,N)+k_n( j+1,N)$$
\end{proposition}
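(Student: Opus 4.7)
The plan is to derive the square identity from the observation that $G_N(v)$ evaluated at $j$ and at $j+1$ are related by a simple rational factor. Concretely, from the product form
$$G_N(v)\bigm|_j = (1+\lambda qv)^{N-j}(1-\lambda pv)^j$$
one sees immediately that
$$(1-\lambda pv)\,G_N(v)\bigm|_j \;=\; (1+\lambda qv)\,G_N(v)\bigm|_{j+1},$$
since both sides equal $(1+\lambda qv)^{N-j}(1-\lambda pv)^{j+1}$. Extracting the coefficient of $v^n$ on the two sides yields
$$k_n(j,N)-\lambda p\,k_{n-1}(j,N)\;=\;k_n(j+1,N)+\lambda q\,k_{n-1}(j+1,N),$$
and transposing the two terms with the $p$ coefficient gives the claimed square identity.

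Alternatively, and this is what I would present if the author prefers to stay at the level of already-stated recurrences, the identity is a direct algebraic consequence of the two Pascal recurrences of the preceding proposition. Apply Pascal recurrence~1 with $j$ replaced by $j+1$ to obtain an expression for $k_n(j+1,N+1)$; equate this with the expression for the same quantity given by Pascal recurrence~2; then solve for $k_n(j,N)$. The result is precisely the square identity.

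No step presents a genuine obstacle: both routes are one-line manipulations. The only thing to be careful about is the bookkeeping of signs in the factor $(1-\lambda pv)$ versus $(1+\lambda qv)$, and the direction in which terms are moved across the equality so that the three "known" corners of the $2\times 2$ submatrix $k_{n-1}(j,N)$, $k_{n-1}(j+1,N)$, $k_n(j+1,N)$ all appear with positive coefficients, yielding $k_n(j,N)$ at the lower-left corner as advertised.
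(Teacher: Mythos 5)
Your proof is correct on both routes, and both differ from the paper's in a small but genuine way. The paper derives the identity by taking $p$ times Pascal recurrence~1 plus $q$ times that same recurrence with $j\to j+1$, invoking the martingale recurrence to collapse the left-hand side to $k_n(j,N)$, and then cancelling a factor of $q$. You instead eliminate $k_n(j+1,N+1)$ directly between Pascal recurrence~1 (at $j+1$) and Pascal recurrence~2, which is the same content but shorter: it avoids the detour through the martingale recurrence and the division by $q$ (a cosmetic advantage, though it also sidesteps any worry about $q=0$). Your generating-function route, via
$$(1-\lambda pv)\,G_N(v)\bigm|_{j}=(1+\lambda qv)\,G_N(v)\bigm|_{j+1},$$
is just the one-line packaging of that same elimination, since the two Pascal recurrences are themselves read off from $(1+\lambda qv)G_N$ and $(1-\lambda pv)G_N$; it has the virtue of proving the identity in a single coefficient extraction without quoting either earlier proposition. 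The sign bookkeeping you flag is handled correctly: the coefficient of $v^n$ gives $k_n(j,N)-\lambda p\,k_{n-1}(j,N)$ on the left and $k_n(j+1,N)+\lambda q\,k_{n-1}(j+1,N)$ on the right, and transposing the $\lambda p$ term yields the stated identity with all three known corners appearing with positive coefficients.
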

\begin{proof}Combine $p$ times the first equation above with $q$ times that same equation with $j\to j+1$.
Applying the martingale recurrence on the left-hand side yields
$$k_n( j,N)=p\,k_n( j,N)+q\,k_n( j+1,N)+\lambda pq\,k_{n-1}( j,N)+\lambda q^2\,k_{n-1}( j+1,N)$$
Subtracting off $p\,k_n( j,N)$ and dividing out a common factor of $q$ yields the result.
\end{proof}

\end{subsection}
\begin{subsection}{Orthogonality}
For orthogonality, we wish to show that $\avg G(v)G(w).$ is a function of the product $vw$ only. We have, using independence and centering,
\begin{align*}
\avg G(v)G(w).&=\prod \avg (1+(v+w)(X_i-\mu)+vw(X_i-\mu)^2). \\
&=\prod(1+vw\,\sigma^2)=(1+vw\,\sigma^2)^N
\end{align*}
where the variance $\sigma^2=\lambda^2 pq$ in our context. This yields the squared norms 
$$\|k_n\|^2=\avg k_n,k_n.=\binom{N}{n}(\lambda^2 pq)^n\ .$$

Introducing matrices, we can express the orthogonality relations compactly.
Let $B$, the \textsl{binomial distribution matrix}, be the diagonal matrix
$$B=\text{diag}\,(p^N,Np^{N-1}q,\ldots,\binom{N}{i}\,p^{N-i}q^i,\ldots,q^N)$$
Let $\Gamma$ denote the diagonal matrix of squared norms,
$$\Gamma=\text{diag}\,(1,N(\lambda^2 pq),\ldots,\binom{N}{i}\,(\lambda^2 pq)^i,\ldots,(\lambda^2 pq)^N)$$
For fixed $N$, we write $\Phi$ for $\Phi^{(N)}$ which has $ij$ entry equal to $k_i(j,N)$.
Now $G(v)=\sum v^i\Phi_{ij}$, and we have
\begin{align*}
\sum_{i,j} v^iw^j(\Phi B\Phi^T)_{ij}&=\sum_{i,j,n}v^i w^j\Phi_{in}B_{nn}\Phi_{jn}\\
&=\avg G(v)G(w).=\sum_n(vw)^n\Gamma_{nn}\ .
\end{align*}

In other words, the orthogonality relation takes the form

$$\Phi B\Phi^T=\Gamma$$
which gives for the inverse
$$\Phi^{-1}=B\Phi^T\Gamma^{-1}\ .$$
\end{subsection}

In the following sections we will detail linearization formulas for the symmetric and non-symmetric cases, derive the corresponding
recurrence formulas and then look at the associated convolution operators on functions.
\end{section}

\begin{section}{Krawtchouk polynomials: general case}
We have the generating function
$$
G(v)=(1+\lambda qv)^{N-j}(1-\lambda pv)^j=\sum_{0\le n\le N} v^n\,k_n(j,N)
$$
with $j$ running from $0$ to $N$. The main feature is the relation
$$G(v)=\prod (1+v(X_i-\mu)) $$
where $X_i$ are independent Bernoulli variables taking values $\lambda$ and $0$ with probabilities $p$ and $q$ respectively.

\begin{subsection}{Linearization coefficients}
We want the expansion of the product $k_\ell k_m$ in terms of $k_n$. First, a simple lemma
\begin{lemma} Let $X$ take values $\lambda$ and $0$. Then the identity
$$(X-\lambda p)^2=\lambda (q-p)(X-\lambda p)+\lambda^2pq$$
holds.
\end{lemma}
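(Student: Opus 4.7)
The plan is to observe that, since $X$ is a two-valued random variable, the centered variable $Y := X - \lambda p$ also takes only two values, namely $\lambda q$ (when $X=\lambda$) and $-\lambda p$ (when $X=0$). Hence $Y$ is annihilated by the quadratic polynomial with those two roots, and the claimed identity is nothing more than writing out that polynomial relation.

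More concretely, I would form
\[
(Y - \lambda q)(Y + \lambda p) = Y^2 - \lambda(q-p)\,Y - \lambda^2 pq,
\]
note that this vanishes at both $Y=\lambda q$ and $Y=-\lambda p$, and conclude $Y^2 = \lambda(q-p)Y + \lambda^2 pq$, which is the claim after substituting back $Y=X-\lambda p$. Using $p+q=1$ to simplify the cross terms $\lambda q - \lambda p = \lambda(q-p)$ and $(\lambda q)(-\lambda p) = -\lambda^2 pq$ is the only arithmetic involved.

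As an alternative, one can just verify the identity pointwise at $X=\lambda$ (giving LHS $=\lambda^2 q^2$ and RHS $=\lambda^2 q(q-p) + \lambda^2 pq = \lambda^2 q^2$) and at $X=0$ (giving LHS $=\lambda^2 p^2$ and RHS $=-\lambda^2 p(q-p) + \lambda^2 pq = \lambda^2 p^2$). Either route is a one-line computation; there is no real obstacle, and the lemma is essentially just the statement that a two-valued variable satisfies its own characteristic quadratic. The only thing to keep straight is the sign conventions and the use of $p+q=1$ when combining terms.
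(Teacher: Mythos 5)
Your proof is correct and matches the paper's: the pointwise check at $X=\lambda$ and $X=0$ is exactly the paper's ``immediately checked,'' and your factorization $(Y-\lambda q)(Y+\lambda p)=0$ is the same polynomial identity the paper obtains by expanding $x(x-\lambda)$ about $\lambda p$, just written in the shifted variable. Nothing is missing.
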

\begin{proof} It is immediately checked. To derive it, expand $x(x-\lambda )$ in Taylor series about $\lambda p$ and equate the result to zero.
\end{proof}
In our context, we can write this as
\begin{equation}\label{variance}
(X-\mu)^2=\lambda (q-p)(X-\mu)+\sigma^2
\end{equation}
Now multiply
\begin{align*}
G(v)G(w)&=\prod \bigl(1+(v+w)(X_i-\mu)+vw(X_i-\mu)^2\bigr)\\
&=\prod \bigl(1+(v+w+\lambda (q-p)vw)(X_i-\mu)+\sigma^2vw\bigr)
\end{align*}
by the Lemma. Factoring out $1+\sigma^2vw$ from each term and re-expanding yields
\begin{align}\label{eq:convo}\nonumber
\sum_{\ell,m} v^\ell w^m &k_\ell(j,N)k_m(j,N)\\&=(1+\sigma^2vw)^N\,\prod\bigl(1+\frac{v+w+\lambda (q-p)vw}{1+\sigma^2vw} (X_i-\mu)  \bigr) \nonumber\\
&=\sum_n(1+\sigma^2vw)^{N-n}(v+w+\lambda (q-p)vw)^n\,k_n(j,N)
\end{align}
Expanding the coefficient of $k_n$, we have
$$
\sum_{\alpha+\beta+\gamma=n,\delta}\binom{n}{\alpha,\beta,\gamma}\binom{N-n}{\delta}v^\alpha w^\beta (\lambda(q-p))^\gamma
(\sigma^2vw)^\delta
$$
Fixing 
$$\ell=n-\beta+\delta\quad\text{and}\quad m=n-\alpha+\delta$$
 yields
\begin{theorem}\label{thm:LF} {\rm Linearization formula.}\hfb
The coefficient of $k_n$ in the expansion of the product $k_\ell k_m$ is
$$\sum_\delta \frac{n!}{(n-m+\delta)!(n-\ell+\delta)!(\ell+m-n-2\delta)!}\,\binom{N-n}{\delta}  (\lambda(q-p))^{\ell+m-n-2\delta}
\sigma^{2\delta}\ .$$
\end{theorem}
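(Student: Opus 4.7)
The plan is simply to extract coefficients from the bivariate generating-function identity \eqref{eq:convo} established just above the theorem. Since the coefficient of $v^\ell w^m$ on the left-hand side is precisely $k_\ell(j,N)\,k_m(j,N)$, and the right-hand side is written as a linear combination of the $k_n(j,N)$, it suffices to compute the coefficient of $v^\ell w^m$ in the polynomial $(1+\sigma^2 vw)^{N-n}(v+w+\lambda(q-p)vw)^n$ for each $n$.

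To do this, I would apply the binomial theorem to the first factor, producing a sum indexed by $\delta$ with term $\binom{N-n}{\delta}\sigma^{2\delta}(vw)^\delta$, and apply the multinomial theorem to the second factor, producing a sum indexed by triples $(\alpha,\beta,\gamma)$ with $\alpha+\beta+\gamma=n$ and term $\binom{n}{\alpha,\beta,\gamma}v^\alpha w^\beta(\lambda(q-p))^\gamma(vw)^\gamma$. The combined total $v$-power is then $\alpha+\gamma+\delta$, and the combined $w$-power is $\beta+\gamma+\delta$.

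Next, I would isolate the monomial $v^\ell w^m$ by solving the three linear constraints $\alpha+\gamma+\delta=\ell$, $\beta+\gamma+\delta=m$, and $\alpha+\beta+\gamma=n$ for $(\alpha,\beta,\gamma)$, leaving $\delta$ free. Pairwise subtraction gives $\alpha=n-m+\delta$, $\beta=n-\ell+\delta$, and $\gamma=\ell+m-n-2\delta$. Substituting back into the multinomial coefficient yields
$$\binom{n}{\alpha,\beta,\gamma}=\frac{n!}{(n-m+\delta)!\,(n-\ell+\delta)!\,(\ell+m-n-2\delta)!},$$
and summing over the remaining index $\delta$ produces the stated formula.

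There is no real obstacle: the content of the theorem lies in the generating-function identity \eqref{eq:convo}, which has already been derived from the key lemma on $(X-\mu)^2$. The only point requiring care is bookkeeping — remembering that the third monomial $\lambda(q-p)vw$ in the multinomial expansion contributes powers of $v$ and $w$ in addition to the scalar factor $\lambda(q-p)$, and that the range of $\delta$ is implicitly restricted by the nonnegativity of $\alpha$, $\beta$, $\gamma$ (so that $\max(0,\ell-n,m-n)\le \delta\le \lfloor(\ell+m-n)/2\rfloor$, with the convention that factorials of negative arguments are infinite and kill the corresponding term).
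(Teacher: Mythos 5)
Your proposal is correct and follows essentially the same route as the paper: expand the right-hand side of \eqref{eq:convo} by the binomial and multinomial theorems and match the coefficient of $v^\ell w^m$, solving $\alpha=n-m+\delta$, $\beta=n-\ell+\delta$, $\gamma=\ell+m-n-2\delta$. Your explicit bookkeeping of the $(vw)^\gamma$ factor coming from the third monomial is in fact slightly more careful than the paper's displayed expansion, which omits that factor while nonetheless using it in the identification of $\ell$ and $m$.
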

\begin{subsubsection}{Recurrence formula}
The three-term recurrence formula characteristic of orthogonal polynomials follows by specializing $\ell=1$ in the linearization formula.
First, compute the constant term and coefficient of $v$ from the generating function $G$ :
$$k_0=1\quad\text{and}\quad k_1=\lambda(Nq-j)$$
From the linearization formula, we pick up three terms, with $n=m$ and $n=m\pm1$. We get
\begin{proposition}{\rm Recurrence formula}
$$\lambda(Nq-j)\,k_m=(m+1)k_{m+1}+\lambda (q-p)\,m\,k_m+\lambda^2pq(N+1-m)\,k_{m-1}$$ 
\end{proposition}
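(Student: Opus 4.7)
The plan is to take the linearization formula of Theorem~\ref{thm:LF} and specialize it at $\ell=1$, then substitute the explicit value $k_1=\lambda(Nq-j)$ obtained by reading off the coefficient of $v$ in the generating function $G(v)$. Since the left-hand side of the recurrence is $k_1\,k_m$, what remains is to show that only three values of $n$ contribute to the sum on the right.

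The bulk of the argument is a finite case analysis on the summation indices. With $\ell=1$, the general coefficient becomes
\[
\sum_\delta \frac{n!}{(n-m+\delta)!(n-1+\delta)!(1+m-n-2\delta)!}\,\binom{N-n}{\delta}(\lambda(q-p))^{1+m-n-2\delta}\sigma^{2\delta}.
\]
The three factorials in the denominator force $n-m+\delta\ge 0$, $n-1+\delta\ge 0$, and $1+m-n-2\delta\ge 0$ simultaneously, together with $\delta\ge 0$. A short bookkeeping check then shows that only three $(n,\delta)$ pairs survive: $(m+1,0)$, $(m,0)$, and $(m-1,1)$. For $\delta\ge 2$ the constraints $n\le m+1-2\delta$ and $n\ge m-\delta$ are incompatible, so the sum truncates.

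For each of the three surviving pairs I would simply evaluate the coefficient. The case $(m+1,0)$ yields $(m+1)!/(1!\,m!\,0!)=m+1$ multiplying $k_{m+1}$; the case $(m,0)$ yields $m\lambda(q-p)$ multiplying $k_m$; and the case $(m-1,1)$ yields $\binom{N-m+1}{1}\sigma^2=(N+1-m)\lambda^2 pq$ multiplying $k_{m-1}$, using $\sigma^2=\lambda^2 pq$. Assembling these and equating to $k_1 k_m=\lambda(Nq-j)\,k_m$ produces the claimed recurrence.

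The only real obstacle is the index analysis that limits the sum to three terms; once that is done, the remaining computation is mechanical substitution into the multinomial coefficient. A minor cosmetic point is the endpoint behavior: when $m=0$ the $k_{m-1}$ term drops out automatically because its coefficient has an extra factor of $m$ (from the $n-m+\delta\ge 0$ constraint forcing $\delta\ge 1$ and the binomial $\binom{N-n}{\delta}$ carrying the correct multiplicity), and when $m=N$ the $k_{m+1}$ term vanishes since $k_{N+1}\equiv 0$, both consistent with the stated formula.
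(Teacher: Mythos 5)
Your proposal is correct and follows exactly the paper's route: specialize $\ell=1$ in Theorem~\ref{thm:LF}, use $k_1=\lambda(Nq-j)$, and observe that the index constraints leave only the three pairs $(n,\delta)=(m+1,0),(m,0),(m-1,1)$, whose coefficients you evaluate correctly. The only blemish is your parenthetical explanation of the $m=0$ endpoint --- the coefficient $(N+1-m)\lambda^2pq$ of $k_{m-1}$ does not carry a factor of $m$; that term is absent simply because $n=-1$ is not a valid index (equivalently, by the convention $k_{-1}\equiv 0$) --- but this is immaterial to the argument.
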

The terms $k_m$ and $k_{m+1}$ arise with $\delta=0$, with the term $k_{m-1}$ the only contribution for $\delta=1$.
\end{subsubsection}
\end{subsection}
\begin{subsection}{Krawtchouk transforms. Inversion}
We identify functions on $\{0,1,\ldots,N\}$ with $\RR^{N+1}$ and the Krawtchouk transforms via the action of the matrix
$\Phi^{(N)}$ on that space. For given $N$, $\Phi$ denotes $\Phi^{(N)}$. \bigskip

For our standard transform, we think of row vectors with multiplication by $\Phi$ on the right. Thus,
the transform $F$ of a function $f$ is given by
$$F(j)=\sum_n f(n)\,k_n(j,N) \qquad \text{or} \qquad  \=F=^\dag=\=f=^\dag\,\Phi$$
where, e.g., $\=f=$ is the column vector with entries the corresponding values of $f$. \bigskip

The inversion formula is conveniently expressed in terms of matrices. 
\begin{proposition}\label{prop:inversion}
Let $P$ be the diagonal matrix 
$$P=\mbox{\rm diag}\,((\lambda p)^N,\ldots,(\lambda p)^{N-j},\ldots,1)$$
Let $P'$ be the diagonal matrix
$$P'=\text{\rm diag}\,(1,\ldots,(\lambda p)^j,\ldots,(\lambda p)^N)$$
Then
$$\Phi P\Phi=\lambda^NP'$$
\end{proposition}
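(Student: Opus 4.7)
The plan is to work at the level of generating functions, since the defining relation $G(v) = \sum_i v^i k_i(j,N) = (1+\lambda qv)^{N-j}(1-\lambda p v)^j$ already encapsulates all information about $\Phi$. Specifically, I will fix $j$ and compute the generating function of the $j$-th column of $\Phi P \Phi$, namely
\begin{equation*}
\sum_i v^i (\Phi P\Phi)_{ij} \;=\; \sum_n (\lambda p)^{N-n}\, k_n(j,N)\, \sum_i v^i\, k_i(n,N).
\end{equation*}
The inner sum is, by definition of the generating function, $(1+\lambda q v)^{N-n}(1-\lambda p v)^n$.

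Next, I would collect powers by writing $A=\lambda p(1+\lambda qv)$ and $B=1-\lambda pv$, so that the expression becomes $\sum_n A^{N-n}B^n k_n(j,N) = A^N \sum_n (B/A)^n k_n(j,N)$. The remaining sum is again a Krawtchouk generating function evaluated at $w=B/A$, giving $A^N(1+\lambda qB/A)^{N-j}(1-\lambda pB/A)^j = (A+\lambda qB)^{N-j}(A-\lambda pB)^j$. The key algebraic miracle—and probably the only step that requires the slightest attention—is then the cancellation
\begin{align*}
A+\lambda qB &= \lambda p + \lambda^2pq v + \lambda q - \lambda^2 pq v = \lambda,\\
A-\lambda p B &= \lambda p\bigl[(1+\lambda q v)-(1-\lambda p v)\bigr] = \lambda^2 p v,
\end{align*}
which collapses the whole expression to $\lambda^{N-j}(\lambda^2 pv)^j = \lambda^{N+j}p^j v^j$.

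Finally, reading off the coefficient of $v^i$ gives $(\Phi P\Phi)_{ij} = \lambda^{N+j}p^j\, \delta_{ij} = \lambda^N (\lambda p)^j\delta_{ij}$, which is exactly the $(i,j)$ entry of $\lambda^N P'$. No analytic difficulty is anticipated; the main obstacle is purely organizational, namely keeping the two roles of the index $n$ straight (once as the argument of $k_i(n,N)$ on the left and once as the degree of $k_n(j,N)$ on the right) so that the two generating-function identities can be applied in succession.
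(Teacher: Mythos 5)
Your proof is correct and follows essentially the same route as the paper: sum over $i$ using the generating function $G(v)$ with $j$ replaced by $n$, then sum over $n$ using $G$ again, with the cancellation $A+\lambda qB=\lambda(p+q)=\lambda$ doing the work. The only cosmetic difference is that the paper packages the result as a two-variable identity by further summing over $j$ against $\binom{N}{j}w^j$, whereas you read off the Kronecker delta directly from the single-variable generating function at fixed $j$.
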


The proof is similar to that for orthogonality.

\begin{proof}
The matrix equation is the same as the corresponding identity via generating functions. Namely,
$$\sum_{i,j,n}v^i k_{i}(n,N)(\lambda p)^{N-n}k_{n}(j,N)w^j\binom{N}{j}=\lambda^N(1+\lambda pvw)^N$$
First, sum over $i$, using the generating function $G(v)$, with $j$ replaced by $n$. Then sum over $n$, again using the generating 
function. Finally, summing over $j$ using the binomial theorem yields the desired result, via $p+q=1$.
\end{proof}

Thus,

\begin{corollary}\label{cor:inversion}
$$\Phi^{-1}=\lambda^{-N}\,P\Phi P'^{-1}$$
\end{corollary}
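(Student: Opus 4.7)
The plan is to obtain the corollary as an immediate algebraic rearrangement of Proposition~\ref{prop:inversion}, so the work is essentially already done. The identity $\Phi P\Phi=\lambda^N P'$ already exhibits the product $\Phi\cdot(\text{something})\cdot\Phi$ collapsing to a diagonal matrix; solving for $\Phi^{-1}$ just requires isolating one of the $\Phi$ factors.

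First I would observe that $P'$ has diagonal entries $(\lambda p)^j$ for $j=0,\ldots,N$, and in the non-degenerate regime where $\lambda p\neq 0$ (the standing hypothesis) these are all nonzero, so $P'$ is invertible with $P'^{-1}=\operatorname{diag}(1,(\lambda p)^{-1},\ldots,(\lambda p)^{-N})$. Then, starting from the proposition, multiplying on the right by $P'^{-1}$ and dividing by $\lambda^N$ yields
$$\Phi\cdot\bigl(\lambda^{-N}P\Phi P'^{-1}\bigr)=I.$$
Since $\Phi$ is a square $(N+1)\times(N+1)$ matrix, a right inverse is automatically a two-sided inverse, so $\Phi^{-1}=\lambda^{-N}P\Phi P'^{-1}$ as claimed.

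There is essentially no obstacle here; the only thing worth noting is the invertibility of $P'$, which is built into the non-triviality of the model. All the genuine content of the inversion has already been absorbed into Proposition~\ref{prop:inversion}, whose proof uses the generating-function identity combined with the binomial theorem.
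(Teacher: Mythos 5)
Your proposal is correct and matches the paper's intent exactly: the corollary is stated with a bare ``Thus,'' as an immediate rearrangement of $\Phi P\Phi=\lambda^N P'$, which is precisely the right-multiplication by $P'^{-1}$ and division by $\lambda^N$ that you carry out. Your added remarks on the invertibility of $P'$ and on a right inverse of a square matrix being two-sided are harmless and correct.
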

which is the basis for an efficient inversion algorithm, being a simple modification of the original transform.
\end{subsection}
\begin{subsection}{Convolution}
Corresponding to the product of two transforms $F$ and $G$ is the convolution of the original functions $f$ and $g$.
We have, following the proof of the linearization formula, eqs. \eqref{eq:convo},
\begin{align*}
F(j)G(j)&=\sum_{\l,m}f(\l)g(m)k_\l(j)k_m(j)\\
&=\sum_n k_n(j) \sum_{\alpha,\beta,\delta}
 \frac{n!}{\alpha!\beta!(n-\alpha-\beta)!}\,\binom{N-n}{\delta} \\ &\qquad \times(\lambda(q-p))^{n-\alpha-\beta}(\lambda^2pq)^{\delta}
f(n-\beta+\delta)g(n-\alpha+\delta)  \ .
\end{align*}
Thus, we may define the convolution of two functions $f$ and $g$ on $\{0,1,\ldots,N\}$ by
\begin{align}\label{eq:conv}
 \nonumber (f\star g)(n)&=\sum_{\alpha,\beta,\delta} 
 \binom{n}{\alpha,\beta,n-\alpha-\beta}\,\binom{N-n}{\delta} \\&\qquad\times(\lambda(q-p))^{n-\alpha-\beta}(\lambda^2pq)^{\delta}
f(n-\beta+\delta)g(n-\alpha+\delta) 
\end{align}
and we have the relation
$$ F(j)G(j)=\sum_n (f\star g)(n)\,k_n(j) \ .$$
Now, using the inversion formula, Corollary  \ref{cor:inversion}, we have the relation
$$(f\star g)(n)=\lambda^{-N}\,\sum_j F(j)G(j)\,(\lambda p)^{N-n-j}\,k_j(n)$$
for the convolution of functions.
\end{subsection}
\end{section}

\begin{section}{Krawtchouk polynomials: symmetric case}
For the symmetric case, it is convenient to consider the ``signs" process where $X_i$ takes values $\pm1$ with equal probability,
$p=q=1/2$. Thus, $\lambda=2$ and we have the generating function
$$
G(v)=(1+2qv)^{N-j}(1-2pv)^j=(1+v)^{N-j}(1-v)^j=\sum_n v^n\,k_n(j,N) \ .
$$
While $j$ runs from $0$ to $N$, the sum $x=N-2j$ runs from $-N$ to $N$ in steps of 2.
Now, $q-p=0$ and $\sigma^2=1$. \bigskip

In terms of $x=k_1=N-2j$, write $k_n(j,N)=K_n(x,N)/n!$. We have the recurrence 
$$x\,K_n=K_{n+1}+n(N+1-n)\,K_{n-1}$$
with initial conditions $K_0=1$, $K_1=x$. For example, we can generate the next few polynomials
$$K_2=x^2-N\,,\quad K_3=x^3+(2-3N)x\,,\quad K_4=x^4+(8-6N)x^2+3N^2-6N \ .$$

The special identities and recurrences hold with $\lambda=2$, $p=q=1/2$ and simplify accordingly. Of particular interest 
is the simplification of the convolution structure.

\begin{subsection}{Linearization coefficients}
We want the expansion of the product $k_\ell k_m$ in terms of $k_n$. 
In Theorem \ref{thm:LF}, since $q=p$, we have the condition
$$\l+m-n=2\delta$$
and the sum over delta disappears. This leads to a particular set of conditions, namely, that the 
numbers $\l$, $m$, and $n$ satisfy the conditions that they should form the sides of a triangle.
So, define the \textit{triangle function}
$$\Delta(\l,m,n)=\frac{ (\frac{\l+m+n}{2})!}{(\frac{-\l+m+n}{2})!(\frac{\l-m+n}{2})!(\frac{\l+m-n}{2})!}$$
where all terms with a factorial must be nonnegative. Note that this is a multinomial coefficient.
\begin{proposition}\label{cc} In the symmetric case, the  expansion of the product $k_\ell k_m$ is
$$k_\l(j)\,k_m(j)=\sum\binom{n}{\frac{\l-m+n}{2}}\binom{N-n}{\frac{\l+m-n}{2}}\,k_n(j) \ .$$
Alternatively, we have
$$k_\l(j)\,k_m(j)=\sum\binom{N}{\frac{\l+m+n}{2}}\frac{\Delta(\l,m,n)}{\binom{N}{n}}\,k_n(j) \ .$$
\end{proposition}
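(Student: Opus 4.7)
The plan is to obtain Proposition \ref{cc} as a direct specialization of the general linearization formula, Theorem \ref{thm:LF}, to the symmetric parameters $p=q=1/2$, $\lambda=2$, for which $\sigma^2=\lambda^2pq=1$ and $q-p=0$.

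First I would look at the factor $(\lambda(q-p))^{\ell+m-n-2\delta}$ appearing inside the $\delta$-sum of Theorem \ref{thm:LF}. With $q-p=0$, this factor equals $0^{\ell+m-n-2\delta}$, so the only surviving term is the one with $\ell+m-n-2\delta=0$, that is,
\[
\delta=\tfrac{\ell+m-n}{2}.
\]
This forces $\ell+m-n$ to be a nonnegative even integer; together with the nonnegativity of $(n-m+\delta)!=((\ell-m+n)/2)!$ and $(n-\ell+\delta)!=((m-\ell+n)/2)!$, this is precisely the triangle condition $|\ell-m|\le n\le \ell+m$ with $\ell+m+n$ even. Outside this range the coefficient vanishes, which is consistent with the convention built into $\Delta(\ell,m,n)$.

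Next I would substitute this value of $\delta$ into the coefficient in Theorem \ref{thm:LF}. The trinomial becomes
\[
\frac{n!}{((\ell-m+n)/2)!\,((m-\ell+n)/2)!\,0!}=\binom{n}{(\ell-m+n)/2},
\]
and the second binomial collapses to $\binom{N-n}{(\ell+m-n)/2}$, while $\sigma^{2\delta}=1$. This yields the first identity of the proposition directly:
\[
k_\ell(j)k_m(j)=\sum_n \binom{n}{(\ell-m+n)/2}\binom{N-n}{(\ell+m-n)/2}\,k_n(j).
\]

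For the equivalent second form, I would verify the algebraic identity
\[
\binom{n}{(\ell-m+n)/2}\binom{N-n}{(\ell+m-n)/2}=\binom{N}{(\ell+m+n)/2}\,\frac{\Delta(\ell,m,n)}{\binom{N}{n}}
\]
by a one-line factorial manipulation: setting $a=(\ell-m+n)/2$, $b=(-\ell+m+n)/2$, $c=(\ell+m-n)/2$, $d=(\ell+m+n)/2$, both sides reduce to $n!(N-n)!/(a!\,b!\,c!\,(N-d)!)$. No serious obstacle is expected; the only real care needed is checking that the triangle-nonnegativity conditions implicit in $\Delta$ match the range of summation coming from Theorem \ref{thm:LF}, which is essentially a bookkeeping exercise.
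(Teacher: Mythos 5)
Your proof is correct and takes essentially the same route as the paper: specialize Theorem \ref{thm:LF} with $q-p=0$ so that only $\delta=(\ell+m-n)/2$ survives, giving the binomial form, and then obtain the triangular form by rearranging factorials. You merely make explicit the factorial bookkeeping that the paper leaves implicit.
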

\begin{remark}
If $\l+m\ge N$, then the two sides differ by a polynomial vanishing on the spectrum $\{-N,2-N,\ldots,N-2,N\}$.
\end{remark}
\begin{proof} 
The ``triangular" form follows from the binomial form by rearranging factorials.
\end{proof}
\end{subsection}
\begin{subsection}{Krawtchouk transforms. Inversion}
In the symmetric case, the matrices $P$ and $P'$ in Proposition \ref{prop:inversion} and Corollary \ref{cor:inversion}
become identity matrices. Thus, we have
$$\Phi^2=2^NI\qquad\text{and}\qquad\Phi^{-1}=2^{-N}\Phi\ .$$
So the inversion is essentially an immediate application of the original transform.
\end{subsection}

\begin{subsection}{Convolution}
Corresponding to the product of two transforms $F$ and $G$ is the convolution of the original functions $f$ and $g$.
In equation \eqref{eq:conv}, the condition $q-p=0$ entails $n=\alpha+\beta$. We write $a$ for $\alpha$, replacing
$\beta=n-a$ and write $b$ for $\delta$. This gives for the convolution
$$ (f\star g)(n)=\sum_{a,b}\binom{n}{a}\binom{N-n}{b} f(a+b)g(n-a+b)\ . $$
We have the relation
$$ F(j)G(j)=\sum_n (f\star g)(n)\,k_n(j)$$
and the inversion simplifies to
$$(f\star g)(n)=2^{-N}\,\sum_j F(j)G(j)\,k_j(n)$$
for the convolution of the original functions.
\end{subsection}
\end{section}

\begin{section}{Shift operators and matrix formulation of Krawtchouk transform and convolution}
We will show how the transform and convolution can be represented by matrices acting on appropriate spaces.
\begin{subsection}{Transforms}
Introduce the shift operator $T_x$ which acts on a function $f(x)$ by
$$T_xf(x)=f(x+1)$$
Similarly, $T_yf(y)=f(y+1)$ shifts the variable $y$ by 1. For the transform, in the generating function, we replace $v$ by $T_n$,
the matrix shifting the argument $n$ of $f$:
$$F(j)=\sum_n f(n)k_n(j)=\sum_n k_n(j)\,(T_n)^n f(0)=(1+\lambda qT_n)^{N-j}(1-\lambda p T_n)^jf(0)$$
Representing $f$ by the (column) vector of values 
$$\=f= =(f(0),f(1),\ldots,f(N))^\dag$$ 
$T_n$ is represented by the $N+1$ by $N+1$ matrix with 1's on the superdiagonal
and zeros elsewhere and the above formula can be computed recursively using matrices of a very simple form. 
The value $F(j)$ will be the top entry in the resulting vector at each step. \bigskip

One approach is to form
$$T(N)=(1+\lambda qT_n)^N\quad\text{and}\quad U=(1+\lambda qT_n)^{-1}(1-\lambda p T_n)$$
and compute successively 
\begin{equation}\label{eq:iter}
T(N)\=f=\,,UT(N)\=f=\,,U^2T(N)\=f=\,,\ldots,U^NT(N)\=f=
\end{equation}
Form a matrix with these vectors as columns. Then the entries along the top row are the values $F(j)$.  \bigskip

\begin{remark}
Even though we are using the column vector $\=f=$, we are taking the  \textit{transform} multiplying by $\Phi$ on the right, 
that is, computing the entries of $\=f=^\dag \Phi$.
\end{remark}

Considering vectors $\=f=$ with a single nonzero entry equal to one leads to another way to describe the result. Namely, 
the matrix with successive columns equal to the first row from each of the generated matrices $U^jT(N)$ produces $\Phi$.
(See Appendix.) \bigskip

\begin{remark} Note that the matrix $U$ has the expansion
$$U=I-\lambda T+\lambda^2qT^2-\lambda^3q^2T^3+\cdots\;\ .$$
This follows from the identity
\begin{equation}\label{eq:UandT}
I-U=\lambda T(I+\lambda qT)^{-1}
\end{equation}
which may be verified by multiplying both sides by $(I+\lambda qT)$. Expanding in geometric series, noting that $T$ is nilpotent, yields
the above formula for $U$. The coefficients are the entries constant on successive superdiagonals of $U$. 
\end{remark}

\begin{example} Let $N=4$. We have
$$T(4)=(I+\lambda q T)^4= \left[ \begin {array}{ccccc} 1&4\,\lambda\,q&6\,{\lambda}^{2}{q}^{2}&
4\,{\lambda}^{3}{q}^{3}&{\lambda}^{4}{q}^{4}\\0&1&4
\,\lambda\,q&6\,{\lambda}^{2}{q}^{2}&4\,{\lambda}^{3}{q}^{3}
\\0&0&1&4\,\lambda\,q&6\,{\lambda}^{2}{q}^{2}
\\0&0&0&1&4\,\lambda\,q\\0&0&0&0&1
\end {array} \right]$$
and
$$U=(1+\lambda qT)^{-1}(1-\lambda p T)=\left[ \begin {array}{ccccc} 1&-\lambda&{\lambda}^{2}q&-{\lambda}^{3}
{q}^{2}&{\lambda}^{4}{q}^{3}\\0&1&-\lambda&{\lambda}
^{2}q&-{\lambda}^{3}{q}^{2}\\0&0&1&-\lambda&{\lambda
}^{2}q\\0&0&0&1&-\lambda\\\noalign{\medskip}0&0&0&0&
1\end {array} \right]$$

Starting with a column vector $\=f=$, first multiplying by $T(4)$, then successively by $U$ produces one-by-one the entries
of the transform of $\=f=$. \bigskip

For the symmetric case, $\lambda=2$, $p=q=1/2$, $T(N)$ has the binomial coefficients along the superdiagonals while, except for 
1's on the diagonal, the entries of $U$ are $\pm2$ on alternating superdiagonals. Thus,

$$T(4)=(I+ T)^4=\left[ \begin {array}{ccccc} 1&4&6&4&1\\0&1&4&6&4
\\0&0&1&4&6\\0&0&0&1&4
\\0&0&0&0&1\end {array} \right]\quad\text{and}\quad
U= \left[ \begin {array}{ccccc} 1&-2&2&-2&2\\0&1&-2&2&
-2\\0&0&1&-2&2\\0&0&0&1&-2
\\0&0&0&0&1\end {array} \right]\ .
$$

\end{example}

Similarly, replacing the variables $v$ and $w$ in equation \eqref{eq:convo}, by $T_n$ and $T_m$ respectively yields the formula
\begin{align*}
\sum_{n,m}k_n(j,N)&k_m(j,N) (T_n)^n (T_m)^m f(0)g(0)\\
&=\sum_n k_n(j,N) \,(1+\sigma^2T_nT_m)^{N-n}(T_n+T_m+\lambda (q-p)T_nT_m)^n f(0)g(0)
\end{align*}

Representing $T_mT_n$ by the Kronecker/tensor product of the corresponding shift matrices provides an explicit matrix
that when applied to the tensor product of the vectors $\=f=$ and $\=g=$ yields the convolution $f*g$. (See Appendix for an example.)\bigskip

So the convolution can be computed analogously to the transform. Start with
$$T(N)=(1+\sigma^2T_nT_m)^N\quad\text{and}\quad U=(1+\sigma^2T_nT_m)^{-1}(T_n+T_m+\lambda (q-p)T_nT_m)$$
and compute successively as in equation \eqref{eq:iter}. 
\end{subsection}
\end{section}

\begin{section}{Dual Transforms. Binomial bases}
Of course, one could define transforms dually by multiplying column vectors :
$$F(n)=\sum_j k_n(j) f(j) \qquad \text{or} \qquad \=F= =\Phi\=f= \ .$$
Let's begin with an example. \bigskip

\begin{example} For the symmetric case, we observe the result
$$\left[ \begin {array}{rrrrr} 1&1&1&1&1\\4&2&0&-2&-4
\\6&0&-2&0&6\\4&-2&0&2&-4
\\1&-1&1&-1&1\end {array} \right]
 \left[ \begin {array}{ccccc} 1&1&1&1&1\\0&1&2&3&4
\\0&0&1&3&6\\0&0&0&1&4
\\0&0&0&0&1\end {array} \right]
=
  \left[ \begin {array}{ccccc} 1&2&4&8&16\\4&6&8&8&0
\\6&6&4&0&0\\4&2&0&0&0
\\1&0&0&0&0\end {array} \right]  \ .
$$
The matrix on the left is $\Phi^{(4)}$. Observe that column, $m$, say, of binomial coefficients is mapped to its partner column indexed by 
$N-m$, scaled by $2^m$. Note that as functions, functions with zero tails are
mapped to functions with zero tails, analogously to Fourier transforms of compactly supported functions or cutoff functions for filtering. 
\end{example}

In general we have
\begin{proposition} Let $f_m(j)=\displaystyle \binom{m}{j}\,p^{m-j}q^j$. Then the dual transform, $\=F=_m =\Phi\=f=_m$, is given by
$$F_m(n)=\binom{N-m}{n}\,(\lambda q)^n \ .$$
\end{proposition}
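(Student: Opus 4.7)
The plan is to evaluate the sum $F_m(n) = \sum_j k_n(j,N) f_m(j)$ by reading it off the generating function identity from equation \eqref{expm}. Multiplying both sides of
$$(1+\lambda q v)^{N-j}(1-\lambda p v)^j = \sum_n v^n k_n(j,N)$$
by $f_m(j) = \binom{m}{j} p^{m-j} q^j$ and summing over $j$ converts the question into identifying the coefficient of $v^n$ in
$$H(v) = \sum_{j=0}^m \binom{m}{j} p^{m-j} q^j (1+\lambda q v)^{N-j}(1-\lambda p v)^j.$$

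The main step is to simplify $H(v)$. I would split off a factor $(1+\lambda q v)^{N-m}$, leaving $(1+\lambda q v)^{m-j}$ paired with $(1-\lambda p v)^j$ under the binomial sum. Then the sum collapses via the binomial theorem:
$$H(v) = (1+\lambda q v)^{N-m} \sum_{j=0}^{m} \binom{m}{j} \bigl[p(1+\lambda q v)\bigr]^{m-j} \bigl[q(1-\lambda p v)\bigr]^{j} = (1+\lambda q v)^{N-m}\bigl[p(1+\lambda q v) + q(1-\lambda p v)\bigr]^m.$$
The inner bracket is $p + q + \lambda pq v - \lambda pq v = 1$, so $H(v) = (1+\lambda q v)^{N-m}$.

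From here the conclusion is immediate: extracting the coefficient of $v^n$ by the binomial theorem gives
$$F_m(n) = [v^n]\,(1+\lambda q v)^{N-m} = \binom{N-m}{n}(\lambda q)^n,$$
as claimed. There is essentially no obstacle; the only thing to be careful about is the algebraic cancellation inside the bracket, which is exactly the identity $p+q=1$ in disguise, mirroring the same cancellation used at the end of the proof of Proposition \ref{prop:inversion}.
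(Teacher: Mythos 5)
Your proposal is correct and follows essentially the same route as the paper: both factor out $(1+\lambda qv)^{N-m}$ from the generating-function sum, collapse the remaining binomial sum to $\bigl[p(1+\lambda qv)+q(1-\lambda pv)\bigr]^m=1$ via $p+q=1$, and read off the coefficient of $v^n$. No gaps.
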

\begin{proof}
We show the generating function version of the relation. Thus, 
\begin{align*}
\sum_j (1+\lambda qv)^{N-j}&(1-\lambda pv)^j \displaystyle \binom{m}{j}\,p^{m-j}q^j\\
&=(1+\lambda qv)^{N-m}(1+\lambda qv)^{m-j}(1-\lambda pv)^j\displaystyle \binom{m}{j}\,p^{m-j}q^j  \\
&=(1+\lambda qv)^{N-m}(q-\lambda pq v+p+\lambda pqv)^m\\
&=(1+\lambda qv)^{N-m}=\sum_n v^n\binom{N-m}{n}\,(\lambda q)^n \ .
\end{align*}\rightline\qedhere
\end{proof}

So in this basis, call it \textsl{the binomial basis}, $\Phi$ is represented by a matrix with entries on the antidiagonal. Continuing our example, write
$$\B=\left[ \begin {array}{ccccc} 1&1&1&1&1\\0&1&2&3&4
\\0&0&1&3&6\\0&0&0&1&4
\\0&0&0&0&1\end {array} \right] \qquad  \text{and} \qquad 
J=\left[ \begin {array}{ccccc} 0&0&0&0&1\\0&0&0&1&0\\0&0&1&0&0\\0&1&0&0&0
\\1&0&0&0&0\end {array} \right]$$
With $D$ the diagonal matrix $\diag{1,2,2^2,2^3,2^4}$, we have 
\begin{equation}\label{eq:JD}
\Phi \B =\B JD \qquad  \text{or} \qquad {\B}^{-1}\Phi \B=JD
\end{equation}
with 
$$JD = \left[ \begin {array}{ccccc} 0&0&0&0&16\\\noalign{\medskip}0&0&0&8&0
\\\noalign{\medskip}0&0&4&0&0\\\noalign{\medskip}0&2&0&0&0
\\\noalign{\medskip}1&0&0&0&0\end {array} \right]$$
 the matrix representing the transform in the binomial basis. These relations extend to all $N$. \bigskip

A related family of transforms is indicated by the similar calculation
$$\left[ \begin {array}{ccccc} 1&0&0&0&0\\4&1&0&0&0
\\6&3&1&0&0\\4&3&2&1&0
\\1&1&1&1&1\end {array} \right]\Phi= \left[ \begin {array}{ccccc} 1&1&1&1&1\\8&6&4&2&0
\\24&12&4&0&0\\32&8&0&0&0
\\16&0&0&0&0\end {array} \right]$$
which can be expressed in the form
$$J\B J\Phi=D\B J \qquad  \text{or} \qquad (J{\B}J)\Phi (J\B J)^{-1}=DJ$$
with $J$, $D$, and $\B$ as above. Comparing with equation \eqref{eq:JD} indicates a connection
between $\Phi$ and $\Phi^T$. At this point it is straightforward to give a direct proof of the properties
we want.

\begin{proposition} Let $f_i(n)=\displaystyle \binom{N-n}{N-i}\,p^{i-n}\lambda^{-n}$. Then the transform, $\=F=_i^\dag =\=f=_i^\dag\Phi$, is given by
$$F_i(j)=\binom{N-j}{i}\ .$$
\end{proposition}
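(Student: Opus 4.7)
My plan is to prove the identity through its generating function in a dummy variable $u$. Specifically, I would sum the claimed equation $\sum_n f_i(n)\,k_n(j,N)=\binom{N-j}{i}$ against $u^i$, so that it becomes equivalent to
$$\sum_n\Bigl(\sum_i f_i(n)u^i\Bigr)k_n(j,N)=(1+u)^{N-j},$$
and then verify that identity by exploiting the product form of the Krawtchouk generating function $G(v)=(1+\lambda qv)^{N-j}(1-\lambda pv)^j$.

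First I would compute the inner sum. Substituting $k=N-i$ in $\sum_i\binom{N-n}{N-i}p^{i-n}\lambda^{-n}u^i$ and recognizing the binomial theorem, this collapses to $(u/\lambda)^n(1+up)^{N-n}$. So the target identity becomes
$$\sum_n k_n(j,N)\,(u/\lambda)^n(1+up)^{N-n}=(1+u)^{N-j}.$$
Now I would pull out $(1+up)^N$ as an overall factor and write the left side as $(1+up)^N\sum_n k_n(j,N)\,v^n$ with $v=(u/\lambda)/(1+up)$, so that the remaining sum is exactly the generating function $G(v)$.

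The key computation is to simplify $G(v)$ at this particular $v$. A short calculation gives $1+\lambda qv=(1+u)/(1+up)$ and $1-\lambda pv=1/(1+up)$, using $p+q=1$ in both cases. Therefore
$$G(v)=\frac{(1+u)^{N-j}}{(1+up)^{N-j}}\cdot\frac{1}{(1+up)^j}=\frac{(1+u)^{N-j}}{(1+up)^N},$$
and multiplying by $(1+up)^N$ yields exactly $(1+u)^{N-j}$. Extracting the coefficient of $u^i$ on each side gives $F_i(j)=\binom{N-j}{i}$.

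I do not expect any serious obstacle here; the proof is essentially bookkeeping once the right substitution $v=(u/\lambda)/(1+up)$ is spotted. The only mildly delicate point is performing the $u$-sum cleanly (with the reindexing $k=N-i$ and keeping track of the $p$-powers), and verifying the two linear-fractional simplifications of $1\pm\lambda qv$, $1\mp\lambda pv$ that produce the cancellation turning $G(v)$ into $(1+u)^{N-j}/(1+up)^N$. This mirrors the strategy used for the orthogonality and inversion identities earlier in the paper, where a sum over one index is absorbed into $G$ by choosing the argument cleverly.
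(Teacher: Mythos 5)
Your proof is correct and is essentially the paper's own argument: the authors likewise sum the claimed identity against $v^i$, reindex $i\to N-i$, apply the binomial theorem to collapse the inner sum to $(v/\lambda)^n(1+pv)^{N-n}$, and then evaluate the generating function $G$ at the linear-fractional argument $\frac{v/\lambda}{1+pv}$, obtaining $(1+v)^{N-j}/(1+pv)^N$ exactly as you do. No substantive difference beyond notation.
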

\begin{proof}
As in the previous proposition, we show the generating function version of the relation. Consider
\begin{align*}
\sum_i\sum_n \binom{N-n}{N-i}\,p^{i-n}&\lambda^{-n}k_n(j)v^i
=\sum_i\sum_n \binom{N-n}{i} p^{N-i} (\lambda p)^{-n}k_n(j) v^{N-i}\\
&=\sum_n (pv)^n\sum_i \binom{N-n}{i} (vp)^{N-n-i} (\lambda p)^{-n}k_n(j) \\
&=\sum_n (pv)^n(1+pv)^{N-n}(\lambda p)^{-n}\,k_n(j)\\
&=(1+pv)^N\sum_n \left(\frac{v/\lambda}{1+pv}\right)^n\,k_n(j)\\
&=(1+pv)^N\,\left(1+\frac{qv}{1+pv}\right)^{N-j}\left(1-\frac{pv}{1+pv}\right)^{j}\\
&=(1+v)^{N-j}=\sum_i \binom{N-j}{i} v^i \ .
\end{align*}\rightline\qedhere
\end{proof}
\end{section}

\begin{section}{Concluding Remarks}
We have presented Krawtchouk transforms which have the potential to provide
an inherently discrete, efficient alternative to Fourier analysis. By presenting effective algorithms
using matrix techniques to compute transforms and convolution products, we have demonstrated
useful tools that are not only of theoretical interest but are ready for practical applications.
As well, the special binomial transforms we have indicated provide a solid basis for filtering techniques.
Thus, the Krawtchouk analogs of the standard Fourier toolkit are now available. 
Digital image analysis, for example, will provide an important arena for illustrating and developing Krawtchouk methods
as presented in this work. 
\end{section}
\begin{section}{Appendix}
Here we show examples of a transform and of a convolution computation using the matrix techniques discussed in the text.
\begin{subsection}{Krawtchouk transform}
For a non-symmetric example, we take $N=4$, $\lambda=2$, $p=1/4$. We have
$$ T(4)=\left[ \begin {array}{ccccc} 1&6&{\frac {27}{2}}&{\frac {27}{2}}&{\frac {81}{16}}\phantom{\Bigm|}\\
0&1&6&{\frac {27}{2}}&{\frac {27}{2}}\phantom{\Bigm|}\\
0&0&1&6&{\frac {27}{2}}\phantom{\Bigm|}\\
0&0&0&1&6\mathstrut\\
0&0&0&0&1\end {array} \right] 
\,,\ 
U= \left[ \begin {array}{ccccc} 1&-2&3&-9/2&27/4\\
0&1&-2&3&-9/2\\
0&0&1&-2&3\\
0&0&0&1&-2\\
0&0&0&0&1
\end {array} \right]  \ .
$$
The matrices $U^jT(4)$, $0\le j\le 4$, are successively generated, yielding 
$$
 \left[ \begin {array}{ccccc} 1&6&{\frac {27}{2}}&{\frac {27}{2}}&{\frac {81}{16}}\phantom{\Bigm|}\\
0&1&6&{\frac {27}{2}}&{\frac {27}{2}}\phantom{\Bigm|}\\
0&0&1&6&{\frac {27}{2}}\phantom{\Bigm|}\\
0&0&0&1&6\mathstrut\\
0&0&0&0&1\end {array} \right] \,,\ 
\left[ \begin {array}{ccccc} 1&4&\frac{9}{2}&0&-{\frac {27}{16}}\phantom{\Bigm|}\\
0&1&4&\frac{9}{2}&0\phantom{\Bigm|}\\
0&0&1&4&\frac{9}{2}\phantom{\Bigm|}\\
0&0&0&1&4\\
0&0&0&0&1\end {array}
 \right] ,
 \left[ \begin {array}{ccccc} 1&2&-\frac12&-\frac{3}{2}&{\frac {9}{16}}\phantom{\Bigm|}\\
0&1&2&-\frac12&-\frac{3}{2}\phantom{\Bigm|}\\
0&0&1&2&-\frac12\phantom{\Bigm|}\\
0&0&0&1&2\\0&0&0&0&1\end {array}
 \right] 
$$
and
$$
\left[ \begin {array}{ccccc} 1&0&-\frac{3}{2}&1&-\frac{3}{16}\phantom{\Bigm|}\\
0&1&0&-\frac{3}{2}&1\phantom{\Bigm|}\\
0&0&1&0&-\frac{3}{2}\\0&0&0&1&0\phantom{\Bigm|}\\
0&0&0&0&1\end {array} \right] \,,\ 
\left[ \begin {array}{ccccc} 1&-2&\frac{3}{2}&-\frac12&\frac{1}{16}\phantom{\Bigm|}\\
0&1&-2&\frac{3}{2}&-\frac12\phantom{\Bigm|}\\
0&0&1&-2&\frac{3}{2}\phantom{\Bigm|}\\
0&0&0&1&-2\\
0&0&0&0&1\end {array} \right]  \ .
 $$
Think of applying each of these matrices to the column vector consisting of all zeros except for 1 in the $j^{\rm th}$ spot. Then
the transform would be the $j^{\rm th}$ column of $\Phi$. These come from the successive entries in the top row, column $j$. In other words,
the top row of $U^jT(N)$ is the $j^{\rm th}$ column of $\Phi$. Concatenating the  transposed first rows yields
$$\Phi=\left[ \begin {array}{ccccc} 1&1&1&1&1\\
6&4&2&0&-2\\27/2&9/2&-1/2&-3/2&3/2\\
27/2&0&-3/2&1&-1/2\\
81/16&-27/16&9/16&-3/16&1/16
\end {array} \right] \ .
$$

Forming the diagonal matrices $P$ and $P'$, as in Prop.\,\ref{prop:inversion}, pre-multiplying by $\lambda^{-4}P$ and post-multiplying
by $P'^{-1}$ yields $\Phi^{-1}$ immediately.
\end{subsection}
\begin{subsection}{Krawtchouk convolution}
For an example of convolution, take $N=2$, $\lambda=2$, $p=1/4$, $\sigma^2=3/4$. We have the formulas
$$T(2)=(1+\sigma^2T_nT_m)^2\,\  \text{and}\ U=(1+\sigma^2T_nT_m)^{-1}(T_n+T_m+\lambda(q-p)T_nT_m) \ .$$
Let $T= \left[ \begin{smallmatrix} 0&1&0\\0&0&1\\0&0&0\end{smallmatrix}\right]$.
Set 
$$T_n=T\otimes I\ \text{and}\ T_m=I\otimes T$$
 with $I$ the $3\times3$ identity.   \bigskip

[Note the tensor sign denotes Kronecker product associated to the left.] \bigskip

Then form $T(2)$ and $U$. 
As for the transform, calculate $U^jT(N)$ successively. Here we show the top row(s) only, stacked to form a matrix
$$ \left[ \begin {array}{ccccccccc} 1&0&0&0&3/2&0&0&0&9/16\\ 0&1&0&1&1&3/4&0&3/4&3/4\\ 
0&0&1&0&2&2&1&2&1\end {array} \right] 
$$
Multiplying the column $\=f= \otimes \=g=$ on the left by the above matrix produces the convolution $f\star g$:
$$ \left[ \begin {array}{c} {f(0)}\,{g(0)}+\frac{3}{2}\,{f(1)}\,{g(1)}+
{\frac {9}{16}}\,{f(2)}\,{g(2)}\phantom{\Bigm|}\\
 {f(0)}\,{g(1)}+{f(1)}\,{g(0)}+{f(1)}\,{g(1)}+\frac{3}{4}\,{f(1)}\,{g(2)}+\frac{3}{4}
\,{f(2)}\,{g(1)}+\frac{3}{4}\,{f(2)}\,{g(2)}\phantom{\Bigm|}\\
 {f(0)}\,{g(2)}+2\,{f(1)}\,{g(1)}+2\,{f(1)}\,{g(2)}+{f(2)}\,{g(0)}+2\,{f(2)}\,{g(1)}+{f(2)}\,{g(2)}\end {array} \right]
$$
As in the previous section, we can compute
$$\Phi =\left[ \begin {array}{ccc} 1&1&1\\3&1&-1\\9/4&-3/4&1/4\end {array} \right] \ .$$
We have $\=F=^\dag=\=f=^\dag\,\Phi$ and $\=G=^\dag=\=g=^\dag\,\Phi$, for example,
$$\=F=^\dag= \left[{f(0)}+3\,{f(1)}+9/4\,{f(2)},{f(0)}+{f(1)}-3/4\,{f(2)},{f(0)}-{f(1)}+1/4\,{f(2)}\right]$$
and similarly for $\=G=$. One verifies that the $j^{\rm th}$ component of $({\bf {f\star g}}) ^\dag\,\Phi$ is indeed $F(j)G(j)$.
\end{subsection}

\end{section}

\end{document}